\theoremstyle{plain}
\newtheorem{conjecture}{Conjecture}
\newtheorem{theorem}[equation]{Theorem}
\newtheorem{lemma}[equation]{Lemma}
\newtheorem{prop}[equation]{Proposition}
\newtheorem{cor}[equation]{Corollary}
\newtheorem{problem}{Problem}
\theoremstyle{definition}
\numberwithin{equation}{section}
\newcommand{\comment}[1]{}
\DeclareMathOperator{\spec}{\ensuremath{Spec}}
\DeclareMathOperator{\ad}{\ensuremath{ad}}
\DeclareMathOperator{\Hom}{\ensuremath{Hom}}
\DeclareMathOperator{\Fr}{\ensuremath{Fr}}
\title[the  isomorphism problem for the rings of differential operators]{On the isomorphism problem for the rings of differential operators on smooth affine varieties}
\author{Akaki Tikaradze}
\email{ Akaki.Tikaradze@utoledo.edu}
\address{University of Toledo, Department of Mathematics \& Statistics, 
Toledo, OH 43606, USA}
\begin{document}

\begin{abstract}
We show that given two smooth affine varieties over $\mathbb{C}$ such that their rings of differential operators
are Morita equivalent, then  corresponding cotangent bundles are isomorphic
as symplectic varieties.
 
\end{abstract}
\maketitle


\section{Introduction}

Throughout given a smooth affine scheme $X$ of finite type over a ring $S$, by $D(X)$ (respectively $T^*(X)$) we will denote the ring of $S$-relative
algebraic differential operators
on $X$ (the $S$-relative cotangent bundle of $X$). Recall that $D(X)$ asa ring is generated by regular functions $\mathcal{O}(X)$ and vector fields
($S$-linear derivations)
 $T^1_X=\text{Der}_S( \mathcal{O}(X), \mathcal{O}(X))$
subject to the following relations:
$$ \theta\cdot f-f\cdot\theta=\theta(f),\quad  \theta\cdot\theta_1-\theta_1\cdot\theta=[\theta, \theta_1]_{\text{Lie}}, \quad f\in\mathcal{O}(X),\quad \theta,\theta_1\in T^1_X.$$
 
 Recall that the isomorphism problem for the rings of differential operators is as follows. 
 
\begin{problem}  Suppose that $X, Y$ are smooth
affine varieties  over $\mathbb{C}$, such that $D(X)\cong D(Y)$ as $\mathbb{C}$-algebras. Then is it true that $X\cong Y$?
\end{problem}

It is well known that the answer is yes when $X, Y$ are algebraic curves  as proved by Makar-Limanov \cite{ML}, while not much is known
for higher dimensional varieties (see introduction in \cite{BW}).
Much more generally, it has been conjectured by Orlov that if any two smooth varieties $X, Y$ over $\mathbb{C}$ have equivalent
derived categories of coherent $D$-modules, then $X\cong Y.$ Orlov's conjecture has been established for Abelian varieties
by Arinkin (see discussion in \cite{F}). As for smooth affine varieties $X$ and $Y,$ a derived equivalence between $D(X)$ and $D(Y)$ is the same as
a (shifted) Morita equivalence by \cite{YZ}, it follows that Orlov's conjecture for smooth affine varieties boils down to determining whether
a Morita equivalence between $D(X)$ and $D(Y)$ for smooth affine varieties $X, Y$ implies that $X\cong Y.$
Such a result for smooth affine curves over $\mathbb{C}$ was indeed established by Berest and Wilson [\cite{BW}, Theorem 3.3].
Their proof relies on an explicit description of algebras Morita equivalent to $D(X)$ for a smooth affine curve $X$, as well as mad subalgebras approach of
Makar-Limanov. We also remark that the smoothness assumption for high dimensional varieties is essential, as examples constructed in \cite{LSS} showed.

In this note we show the following result.

\begin{theorem}\label{main}
Let $X, Y$ be smooth affine varieties over $\mathbb{C}$ such that $D(X)$ is derived
Morita equivalent to  $D(Y).$
Then $T^*(X)\cong T^*(Y)$ as symplectic varieties.
\end{theorem}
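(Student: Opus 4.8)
The plan is to pass to large positive characteristic, where $D(X)$ acquires a large centre, and to exploit the fact that Morita equivalence preserves the centre together with its canonical Poisson structure. First I would spread out: choose a finitely generated $\Z$-subalgebra $R\subset\mathbb{C}$ over which $X$, $Y$, and the bimodule $P$ implementing the Morita equivalence between $D(X)$ and $D(Y)$ are all defined, giving smooth affine $R$-schemes $X_R,Y_R$ and a $(D(Y_R),D(X_R))$-bimodule $P_R$ that is a Morita equivalence over $R$. After localizing $R$ I may assume it is smooth over $\Z$ and that $T^*(X_R),T^*(Y_R)$ are flat over $R$. For a closed point of $\operatorname{Spec}R$ with residue field $k$ of large characteristic $p$, reduction yields smooth affine $k$-varieties $X_k,Y_k$ with $D(X_k)$ Morita equivalent to $D(Y_k)$; crucially, $P_{R/p^2}$ lifts this equivalence to characteristic $p^2$, and $D(X_{R/p^2})$, $D(Y_{R/p^2})$ are flat lifts of $D(X_k)$, $D(Y_k)$.

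The central input is the classical fact that for smooth $X_k$ in characteristic $p$ the centre $Z(D(X_k))$ is the algebra of functions on the Frobenius twist $T^*(X_k)^{(1)}$, generated by $f^p$ for $f\in\mathcal{O}(X_k)$ and by $\theta^p-\theta^{[p]}$ for vector fields $\theta$. Since Morita equivalence induces a canonical algebra isomorphism of centres, I obtain $\mathcal{O}(T^*(X_k)^{(1)})\cong\mathcal{O}(T^*(Y_k)^{(1)})$, that is, an isomorphism of the twisted cotangent bundles as varieties. To promote this to a symplectic isomorphism I would use the Poisson bracket on the centre defined from the characteristic-$p^2$ lift by $\{z_1,z_2\}=\tfrac1p[\tilde z_1,\tilde z_2]\bmod p$, and check that it is precisely the Frobenius twist of the canonical symplectic bracket of $T^*(X_k)$.

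The key point, and the main technical obstacle, is that this Poisson structure is itself a Morita invariant: because the equivalence lifts to $P_{R/p^2}$ over characteristic $p^2$, the induced isomorphism of centres must intertwine the two brackets. I would prove this by identifying the bracket as the one induced on $HH^0=Z$ by the deformation class of the lift, via the Gerstenhaber-bracket structure on Hochschild cohomology, which is preserved by a lifted Morita equivalence. This produces, for infinitely many $p$, a symplectic isomorphism $T^*(X_k)^{(1)}\cong T^*(Y_k)^{(1)}$, hence $T^*(X_k)\cong T^*(Y_k)$ symplectically over $\overline{\mathbb{F}}_p$ after untwisting along the Frobenius of the field. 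Verifying the Poisson invariance and keeping track of the Frobenius twist are the delicate parts; the remaining reduction-mod-$p$ bookkeeping is routine.

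Finally I would descend to characteristic zero. The scheme $Z$ of symplectic isomorphisms $T^*(X_R)\xrightarrow{\sim}T^*(Y_R)$ is of finite type over $R$, hence over $\Z$. The previous step shows the image of $Z$ in $\operatorname{Spec}\Z$ contains infinitely many primes; being constructible, this image is cofinite and so contains the generic point, whence the generic fibre $Z_{\mathbb{Q}}$ is nonempty. A $\overline{\mathbb{Q}}$-point of $Z$ gives a symplectic isomorphism $T^*(X)\cong T^*(Y)$ over $\overline{\mathbb{Q}}$, and base change to $\mathbb{C}$ completes the proof.
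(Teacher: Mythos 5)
Your first half tracks the paper's strategy closely: spread out over a finitely generated ring $S\subset\mathbb{C}$, reduce mod $p\gg 0$, use the Bezrukavnikov--Mirkovic--Rumynin identification of $Z(D(X_{\bold{k}}))$ with the Frobenius twist of $\mathcal{O}(T^*(X_{\bold{k}}))$ via $f\mapsto f^p$, $\theta\mapsto\theta^p-\theta^{[p]}$, and observe that the center isomorphism coming from the lifted Morita equivalence respects the reduction Poisson bracket (the paper gets this directly from the $S$-algebra embedding $\psi:D(Y)\to M_n(D(X))$ induced by $D(Y)^{op}\cong Hom_{D(X)}(P,P)$, rather than via Gerstenhaber brackets on Hochschild cohomology, but both routes are sound, and conjugating by the semilinear map $i_{\bold{k}}$ takes care of the Frobenius untwisting). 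Up to this point your proposal and the paper agree.

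The genuine gap is in your final descent step. The ``scheme of symplectic isomorphisms $T^*(X_R)\xrightarrow{\sim}T^*(Y_R)$'' is \emph{not} of finite type over $R$: an algebra isomorphism $\mathcal{O}(T^*(Y_R))\to\mathcal{O}(T^*(X_R))$ can send fixed generators to elements of arbitrarily large degree (already $\operatorname{Aut}(\mathbb{A}^2)$ is an ind-scheme of infinite type), so isomorphisms form a rising union of finite-type pieces $Z_d$ indexed by a degree bound $d$, and Chevalley constructibility applies only to each $Z_d$ separately. Your characteristic-$p$ isomorphisms are produced abstractly from the Morita equivalence, and nothing in your argument prevents their degree from growing with $p$; in that case no single $Z_d$ would receive infinitely many primes, and you could not conclude that any $Z_d$ --- hence the generic fiber --- is nonempty. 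This uniform boundedness in $p$ is exactly what the paper must, and does, supply: its lifting statement, Lemma \ref{lift}, is formulated only for isomorphisms constrained by fixed finite $S$-modules ($\phi_{\bold{k}}(V_{\bold{k}})\subset W_{\bold{k}}$ and $\phi_{\bold{k}}^{-1}(W'_{\bold{k}})\subset V'_{\bold{k}}$), and the technical heart of the paper, Lemma \ref{key}, establishes precisely these bounds: via $\psi$, the generators $g^p$, $\theta^p-\theta^{[p]}$ of $Z(D(Y_{\bold{k}}))$ land in $W_{\bold{k}}^p\cap Z(D(X_{\bold{k}}))$ for a fixed finite $W\subset D(X)$, and $i_{\bold{k}}^{-1}$ of that intersection is shown to lie in a fixed finite $V'\subset\mathcal{O}(T^*(X))$ independent of $p$ (proved by an \'etale/Galois reduction to the case of a localized affine space and the Bernstein filtration). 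Without an argument of this kind, the passage from infinitely many characteristic-$p$ isomorphisms to a characteristic-zero one fails, so your proposal as written does not close.
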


As an immediate corollary, we are able to reprove the aforementioned result of Berest and Wilson, see Corollary \ref{BW}.

In view of this result, it is tempting to make the following conjecture.

\begin{conjecture}
Let $X, Y$ be smooth affine varieties over $\mathbb{C}$ such that $T^*(X)\cong T^*(Y).$ 
 Then $X\cong Y.$
\end{conjecture}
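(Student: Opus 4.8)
The plan is to convert the symplectic isomorphism into a graded Poisson-algebra isomorphism and then recover the coordinate ring $\mathcal{O}(X)$ as an intrinsic Poisson subalgebra of $\mathcal{O}(T^*(X))$. Since $X$ is affine, $T^*(X)$ is affine, so a symplectomorphism $\phi:T^*(X)\to T^*(Y)$ is the same datum as an isomorphism of Poisson algebras $\phi^*:\mathcal{O}(T^*(Y))\to\mathcal{O}(T^*(X))$. Here $A:=\mathcal{O}(T^*(X))=\Sym_{\mathcal{O}(X)}T^1_X$ carries the fiber-degree grading $A=\bigoplus_{n\ge 0}A_n$, with $A_0=\mathcal{O}(X)$ and $A_1=T^1_X$, together with a Poisson bracket of weight $-1$, i.e. $\{A_p,A_q\}\subseteq A_{p+q-1}$. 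Because $X\cong\spec A_0$, it suffices to give a description of the subalgebra $A_0\subseteq A$ purely in terms of the bracket $\{-,-\}$ and the commutative product; any such description is automatically transported by the Poisson isomorphism $\phi^*$, and so produces $\mathcal{O}(X)\cong\mathcal{O}(Y)$, whence $X\cong Y$.

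To isolate $A_0$, I would exploit the weight $-1$ of the bracket. For $f\in A_0$ the operator $\ad_f=\{f,-\}$ strictly lowers the grading, hence is locally nilpotent, and its flow $\exp(t\,\ad_f)$ is the fiber translation by $df$, a complete $\mathbb{G}_a$-action tangent to the fibers of $\pi:T^*(X)\to X$. Moreover the Hamiltonian vector fields $\{\ad_f:f\in A_0\}$ span, at every point, the tangent space to the $\pi$-fiber, so the leaf space of the foliation generated by these commuting unipotent flows recovers $X$. This suggests the candidate intrinsic characterization: $A_0$ is a maximal Poisson-commutative subalgebra $B$ such that every $b\in B$ has locally nilpotent $\ad_b$, the associated unipotent group acts with smooth $n$-dimensional affine-space orbits, and $\spec B$ is smooth and affine. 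The first concrete task is to prove that such a $B$ exists and is unique up to Poisson automorphism.

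A complementary route is to recover the entire contracting $\mathbb{G}_m$-action rather than just its invariant ring. The dilation action is generated by the Liouville (Euler) vector field $E$ with $\mathcal{L}_E\omega=\omega$, whose fixed locus is exactly the Lagrangian zero section $X$ and which contracts $T^*(X)$ onto $X$ as $t\to 0$. If one can show that a smooth affine symplectic variety admits an essentially unique complete Liouville field generating an algebraic $\mathbb{G}_m$-action (not merely an additive one) whose fixed locus is Lagrangian and attracting, then the grading on $A$, and therefore $A_0$, is canonical. I would first verify this uniqueness in the parallelizable case and in $\dim X=1$, where it ought to reproduce the curve theorems of Makar-Limanov \cite{ML} and Berest--Wilson \cite{BW}, before attempting the general statement.

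The main obstacle is precisely this uniqueness. A symplectomorphism of $T^*(X)$ need not preserve the grading, the zero section, or the Lagrangian fibration, and smooth affine symplectic varieties typically have very large automorphism groups and many inequivalent Lagrangian polarizations, so neither the candidate subalgebra $B$ nor the Liouville field $E$ is visibly rigid; showing that the cotangent polarization (equivalently, the contracting $\mathbb{G}_m$-structure) is intrinsic is the heart of the conjecture. The delicate point the characterization must exclude is the presence of \emph{degree-one} locally $\ad$-nilpotent Hamiltonians, which also integrate to $\mathbb{G}_a$-actions and could mimic part of $A_0$. To control them I would bring in positivity and finiteness of the grading together with Namikawa-type uniqueness results for conical structures on symplectic varieties, and, following the reduction-mod-$p$ and Poisson $p$-center techniques that are natural in this setting, attempt to pin down $A_0$ as the locus where the restricted Poisson structure degenerates in a prescribed, invariant way.
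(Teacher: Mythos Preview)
The statement you are trying to prove is presented in the paper as a \emph{Conjecture}; the paper does not prove it. The only remark the paper makes is that Makar--Limanov's mad-subalgebra argument, transported to the Poisson algebra $\mathcal{O}(T^*(X))$, settles the case $\dim X=1$. There is therefore no proof in the paper to compare your proposal against.

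Your write-up is not a proof but a research plan, and you have correctly located the obstruction yourself. The candidate characterization of $A_0$ as a maximal Poisson-commutative subalgebra all of whose elements act locally nilpotently does not pin down $A_0$: already in $T^*\mathbb{A}^1=\spec\mathbb{C}[x,y]$ with $\{x,y\}=1$, each of $\mathbb{C}[x]$, $\mathbb{C}[y]$, $\mathbb{C}[x+y^2]$, and infinitely many others, satisfies every condition you list, so the subalgebra is far from unique even up to Poisson automorphism without further input. Controlling this profusion of ``mad'' subalgebras is exactly what Makar--Limanov achieved for curves and is what makes the higher-dimensional statement open. Your alternative route via a canonical contracting $\mathbb{G}_m$-action has the same status: it is not known that a complete Liouville field (equivalently, an algebraic $\mathbb{G}_m$-action with $\mathcal{L}_E\omega=\omega$) with smooth Lagrangian attracting fixed locus is unique up to symplectomorphism on a smooth affine symplectic variety, and the Namikawa-type results you allude to concern singular symplectic cones rather than cotangent bundles of arbitrary smooth affines. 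In short, both routes reduce the conjecture to a rigidity statement that is itself the conjecture in disguise; nothing here closes the gap, which is consistent with the paper leaving the statement as a conjecture.
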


Using  Theorem \ref{main}, the above conjecture would yield Orlov's conjecture for affine varieties.
Thus the conjecture allows us to transfer the isomorphism problem from the domain of noncommutative
ring theory to the one in symplectic algebraic geometry.

\section{Rings of differential operators in characteristic $p$}

In this section we briefly recall some well-known facts about rings of (crystalline) differential operators in characteristic $p$, which
play the crucial role in the proof of Theorem \ref{main}.
Throughout, given a ring, variety, module or a homomorphism
 over a ring $S,$ then its base change with respect to a homomorphism $S\to \bold{k}$ is denoted by the subscript $\bold{k}.$

Let $X$ be a smooth variety over a perfect field $\bold{k}$ of characteristic $p.$ Then it is a fundamental observation of Bezrukavnikov, Mirkovic and Rumynin
[\cite{BMR}, Theorem 2.2.3] that $D(X)$ (which in characteristic $p$ case is called the ring of crystalline differential operators) is an Azumaya algebra
over the Frobenius twist of the cotangent bundle on $X.$ In particaular, the center of $D(X)$ is canonically isomorphic to the Frobenius twist of
the ring of functions on $T^*(X).$ Also, recall that unlike characteristic 0 case, algebra $D(X)$ is not a subalgebra of $\Hom_{\bold{k}}(\mathcal{O}(X), \mathcal{O}(X)).$

Let $X$ be a smooth affine variety over a characteristic $p$ ring $S.$
Recall that the ring of functions on $T_S^*(X)$ is generated by $\mathcal{O}(X)$ and $T^1_X=\text{Der}_{S}(\mathcal{O}(X), \mathcal{O}(X))$
over $S.$ Also recall that a Frobenius twist of $X$ over $S$ is defined as $\Fr_S(X)=\spec(\mathcal{O}(X)^p\otimes_{S^p}S).$
Then the canonical isomorphism $$i_{S}:\Fr_S(\mathcal{O}(T^*(X)))\cong Z(D(X))$$ is defined as follows:
$$i_{S}(f)=f^p, i_{S}(\theta)=\theta^p-\theta^{[p]}, 
f\in \mathcal{O}(X), \theta\in T^1_X,$$
here $\theta^{[p]}\in T^1_X$ denoted the derivation, the $p$-th power of $\theta.$

Next, we need to discuss the definition and the computation of the reduction $\mod p$ Poisson bracket on $Z(D(X_{\bold{k}}))$ provided that the variety $X_{\bold{k}}$
was obtain by a base change from a variety $X$ defined over a finitely generated subring $S\subset\mathbb{C}.$

Recall that given an associative flat $\mathbb{Z}$-algebra $R$ and a prime number $p,$
 then the center $Z(R/pR)$ of its reduction $\mod p$  acquires a natural Poisson bracket (see for example [\cite{BK} Section 5.2]),
  which we refer to as the reduction $\mod p$ bracket,
defined as follows. Given $a, b\in Z(R/pR)$, let $z, w\in R$ be their respective lifts. 
Then the Poisson bracket $\lbrace a, b \rbrace$ is defined to be $$\frac{1}{p}[z, w] \mod p\in Z(R/pR).$$

Let $X$ be a smooth affine variety over a finitely generated $S\subset\mathbb{C}.$ Let $X_p$ denote its base
change $\mod p$ and $S_p=S/pS.$ So, $X_p$ is smooth over $S_p,$ and $D(X_p)=D(X)/pD(X).$ Thus, we may equip the center of $D(X_p)$ 
with the reduction $\mod p$ Poisson bracket.
Let $S\to\bold{k}$ be a base change to a perfect field of characteristic $p.$
 Noting that $Z(D(X_{\bold{k}}))=Z(D(X_p))\otimes_{S_p}\bold{k}$, we can extend the Poisson bracket from $Z(D(X_p))$ to
 a $\bold{k}$-linear Poisson bracket on $Z(D(X_{\bold{k}})).$
Then by a crucial computation in [\cite{BK}, Lemma 2]
the induced Poisson bracket on $\mathcal{O}(T^*(X_{\bold{k}}))$ via the isomorphism $i_{\bold{k}}:\mathcal{O}(T^*(X_{\bold{k}}))\to Z(D(X_{\bold{k}}))$
equals to the minus of the usual 
Poisson bracket on the cotangent bundle

\section{Auxiliary result} 

In this section we show a result which plays the crucial role in the proof of Theorem \ref{main}.
To state the following result, we recall that given a smooth affine variety $Y$ over a perfect field $\bold{k}$ of charactreristic $p$, we have the canonical
isomorphism $i_{\bold{k}}: \mathcal{O}(T^*(Y))\cong Z(D(Y)).$ In what follows, given a subset $V\subset A$ of an $R$-algebra and $n\in\mathbb{N}$, 
we denote by
$V^n$ the $R$-linear span of products of at most $n$ elements of $V.$

\begin{prop}\label{key}
Let $X$ be a smooth affine variety over a finitely generated ring $S\subset\mathbb{C}.$
Let $V\subset D(X)$ be a finite $S$-submodule, and $\theta\in T^1(X), 0\neq e\in D(X).$
Then there is a base change $S\to S'$ to a finitely generated subring of $\mathbb{C}$
together with finite $S'$-submodules $V'\subset \mathcal{O}(T^*(X_{S'})), W\subset D(X_{S'}),$ and $0\neq \phi\in\mathcal{O}(X_{S'}),$
such that the following holds. For any base change $S'\to\bold{k}$ to an algebraically closed field $\bold{k}$ of charactersistic $p$
we have 
$$\i_{\bold{k}}^{-1}(e^{-1}V_{\bold{k}}^p\cap Z(D(X_{\bold{k}})))\subset V'_{\bold{k}},\quad \theta_{\bold{k}}^p-\theta_{\bold{k}}^{[p]}\in \phi^{-p}W_{\bold{k}}^p.$$

\end{prop}

For the proof we utilize the following result from commutative algebra.
As usual, given a commutative ring $B$ of characteristic $p$, we denote by $\Fr(B)$ the image of $B$ under the Frobenius homomorphism.
Also, given a subset $V$ of an integral domain, We denote by $VV^{-1}$ the set of fractions of the form $vw^{-1}, v\in V, w\in V\setminus{0}.$

\begin{lemma}\label{key'}

Let $S\subset\mathbb{C}$ be a finitely generated subring, let $A$ be a finitely generated commutative ring over $S$, so that $A_{\mathbb{C}}=A\otimes_S\mathbb{C}$
is a domain. Let $V\subset A$ be a finite $S$-submodule, $0\neq f\in A$. Then there exists a 
base change $S\to S'$ to a finitely generated subring of $\mathbb{C}$ together with a
finite $S'$-submodule $W\subset A_{S'}$ such that the following holds.
For any base change $S'\to\bold{k}$ to an algebraically closed field of characteristic $p$, we have 
$$fV_{\bold{k}}^p\cap \Fr_p(A_{\bold{k}})\subset \Fr_p(W_{\bold{k}}), \quad V_{\bold{k}}{V'_{\bold{k}}}^{-1}\cap A_{\bold{k}}\subset W_{\bold{k}}.$$
\end{lemma}

\begin{proof}
We give a short geometric proof.
We may assume without loss of generality that $A$ is smooth over $S.$ Localizing $S$ if necessary,
there exists a smooth projective variety $X$ over $S$ containing $\spec(A)=U$ as an open subvariety.
Recall that given a rational function $g$ on $X,$ then div$(g)$ denotes the principal divisor of $g.$
Let $\lbrace f_1,\cdots, f_n\rbrace$ be a generating set of $V$ as an $S$-module. Let $D$ be an effective Weil divisor
on $X,$ such that $div(f_i)+D\geq 0$ for any $i.$
Let $L=\mathcal{O}(D)$ be the line bundle corresponding to $D.$ So, 
for any open subset $U\subset X$, a rational function $g$ belongs to $\Gamma(U, L)$ 
 if $(g)+D\geq 0.$ Now, localizing S if necessary, for any base change $S\to \bold{k}$ to an algebraically closed field of characteristic $p,$
  we still have that $div(f_i)+D_{\bold{k}}\geq 0.$
Let $g\in A_{\bold{k}}$ be so that $g^p\in V^p_{\bold{k}}.$
Then $p\text{div}(g)+pD_{\bold{k}}\geq 0.$ So, $g\in \Gamma(X_{\bold{k}}, L_{\bold{k}})=\Gamma(X, L)\otimes_S\bold{k}.$
Taking $W=\Gamma(X, L)\cap A,$ we are done since the global sections of a coherent sheaf of a projective variety over $S$ form
a finitely generated $S$-module.

Let $X\setminus U=\bigcup Z_i$, where $Z_i$ are closed codimension 1 irreducible subvarieties.
Let $D=\sum Z_i$ be an effective divisor. Let $m$ be large enough so that
$(g)+mD\geq 0$ for any $g\in V.$ 
Also, let $l$ be large enough so that any element of $V$ has at most order $l$ vanishing on each $Z_i.$
So, for $p\gg 0$ and a base change $S\to\bf{k}$ to an algebraically closed field of characteristic $p,$
the order of vanishing of any nonzero element of $V_{\bold{k}}$ on each $(Z_{i})_{\bold{k}}$ is at most $l.$
Then for any $f\in V_{\bold{k}}{V'_{\bold{k}}}^{-1}$ we have
$\text{div}(f)+(m+l)D_{\bold{k}}\geq 0.$ 
We put $L=\mathcal{O}((m+l)D).$
So, $f\in \Gamma(X_{\bold{k}}, L_{\bold{k}}).$ 
Taking $W=\Gamma(X, L)$ we are done just like above.

\end{proof}

\begin{proof}[Proof of Proposition \ref{key}]
At first, remark that if $U=X_f$ is a principal affine open subvariety $0\neq f\in\mathcal{O}(X)$
such that the assertions of the lemma hold for $U$, then they also hold for $X.$
Indeed,
we may view $D(X), \mathcal{O}(T^*(X))$ as subalgebras of $D(U), \mathcal{O}(T^*(U))$ respectively.
Similarly, $D(X_{\bold{k}})$ is a subalgebra of $D(X'_{\bold{k}})$ for any base change $S\to\bold{k}.$
let $V''\subset \mathcal{O}(T^*(U))$ be a finite $S$-submodule such that $V_{\bold{k}}^p\cap Z(D(U_{\bold{k}}))\subset i_{\bold{k}}(V''_{\bold{k}}).$
Then $V'=V''\cap \mathcal{O}(T^*(X))$ is a finite $S$-module and $V_{\bold{k}}^p\cap Z(D(X_{\bold{k}}))\subset i_{\bold{k}}(V'_{\bold{k}}).$
Let $\theta\in T^1_X\subset T^1_U.$ Then there is a finite $S$-submodule $W'\subset D(U)$ and $0\neq \phi_1\in\mathcal{O}(U)$ so that 
$\phi_1^p(\theta^p-\theta^{[p]})\in \phi_1^pW'^p_{\bold{k}}.$
Getting rid of the denomiantors, there exists $0\neq \phi\in\mathcal{O}(X)$ and $W\subset D(X)$ such that $\phi^p(\theta^p-\theta^{[p]})\in W_{\bold{k}}^p.$

Using the Noether normalization and replacing $X$ by its open subset if necessary,
we may assume that we have a finite etale map $\pi: X\to U,$ where 
 $U=\spec S[x_1,\cdots, x_n]_f$ and $0\neq f\in S[x_1,\cdots, x_n].$
Denote by $y_1,\cdots, y_n\in \text{Der}_S(\mathcal{O}(X))$ the pullbacks of vector fileds $\partial_{x_1},\cdots, \partial_{x_n}$
from $U$ to $X.$ So, localizing $X$ further if necessary, $y_1,\cdots, y_n$ form a basis of $\text{Der}_S(\mathcal{O}(X))$ as a free $\mathcal{O}(X)$-module.
Without loss of generality we may assume that $V=\bigoplus_{|I|<m} V_0y^I$ for some $m,$
 where $1\in V_0\subset \mathcal{O}(X)$
is a finite $S$-module. 
Let us choose a finite $S$-module $W\subset\mathcal{O}(X)$ containing $V_0$, such that $W$ generates $\mathcal{O}(X)$ over $S.$
Then for some $m$, we have 
$[y_i, W]\subset W^l, 1\leq i\leq n.$ Putting $M=W^l$, we get that $[y_i, M]\subset M^2, 1\leq i\leq n.$
 Then for any base change $S\to\bold{k}$ to a field of characteristic $p,$ we have
  $V_{\bold{k}}^p\subset \bigoplus_{|I|\leq pm}M_{\bold{k}}^py^{I}.$
Let $g\in \mathcal{O}(T^*(X_{\bold{k}}))=\mathcal{O}(X_{\bold{k}})[y_1,\cdots, y_n]$ so that
$ei_{\bold{k}}(g)\in V_{\bold{k}}^p.$ 
Write $e=\sum_{\alpha}e_{\alpha}y^{\alpha}, g=\sum_{\beta} g_{\beta}y^{\beta}.$ So $ei_{\bold{k}}(g)=\sum e_{\alpha}g_{\beta}^py^{\alpha+p\beta}.$
Now, choosing $p\gg 0$, we conclude that $e_{\alpha}g_{\beta}^p\in M_{\bold{k}}^p$ for all $\alpha, \beta.$
 So, by Lemma \ref{key'} there
exists a finite $S$-submodule $W'\subset \mathcal{O}(X)$ so that $g_{\beta}\in W_{\bold{k}}$ for all $\beta.$ 
Hence, by taking $W=\bigoplus_{\alpha\leq m}W'y^{\alpha}\subset \mathcal{O}(T^*(X))$,
we have that $g\in W_{\bold{k}}$ and we are done.

Finally, let $\theta\in T^1_X.$ So $\theta=\sum g_iy_i, g\in \mathcal{O}(X).$
Then 
$$\theta^p-\theta^{[p]}=i_{\bold{k}}(\theta)=\sum_ii_{\bold{k}}(g_i)i_{\bold{k}}(y_i)=\sum_ig_i^py_i^p.$$
Taking $W$ to be an $S$-span of $g_i, y_i,$ we have that $\theta^p-\theta^{[p]}\in W_{\bold{k}}.$

\end{proof}


\section{Proof of the main result}

The proof of Theorem \ref{main} is essentially a simple application of  the reduction $\mod p\gg 0$ method, following ideas in \cite{BK}.

At first, we recall the following standard result about lifting isomorphisms from characteristic $p\gg 0$ to characteristic 0.
 Its proof is included for the reader's convenience. 
\begin{lemma}\label{lift}

Let $S\subset\mathbb{C}$ be a finitely generated ring.
Let $A, B$ be Poisson $S$-algebras. Let $V, V'\subset A$ and $W,W'\subset B$ 
be finite $S$-submodules generating the respective algebra over $S.$
Suppose that for all $p\gg 0$ and base change $S\to \bold{k}$ to an algebraically closed field $\bold{k}$
of characteristic $p$, there exists a  pair of Poisson $\bold{k}$-algebra homomorphisms
$\phi_{\bold{k}}:A_{\bold{k}}\to B_{\bold{k}}, \psi_{\bold{k}}:B_{\bold{k}}\to A_{\bold{k}},$ such that $\phi_{\bold{k}}(V_{\bold{k}})\subset W_{\bold{k}}$
and $\psi_{\bold{k}}(W'_{\bold{k}})\subset V'_{\bold{k}}$ and $\psi_{\bold{k}}\phi_{\bold{k}}=Id_{A_{\bold{k}}}, \phi_{\bold{k}}\psi_{\bold{k}}=Id_{B_{\bold{k}}}.$
Then there exist Poisson $\mathbb{C}$-algebra homomorphisms $\phi:A_{\mathbb{C}}\to B_{\mathbb{C}}, \psi: B_{\mathbb{C}}\to A_{\mathbb{C}}$
so that
$\phi(V_{\mathbb{C}})\subset W_{\mathbb{C}},\psi(W'_{\mathbb{C}})\subset V'_{\mathbb{C}}$ and $\psi\phi=Id, \phi\psi=Id$ 
\end{lemma}
\begin{proof}

Localizing $S$ if necessary, we may assume that $V, V', W, W'$ are free $S$-modules, as are $A, B.$
Given an $S$-ring $R$, denote by $(R)$ the following set:
$$(R)=\lbrace (x, y), x\in \Hom_S(V_R, W_R), y\in \Hom_R(W'_R, V'_R)\rbrace$$ 
where $x$ (respectively $y$) extends to an $R$-Poisson algebra
homomorphism $\tilde{x}:A_{R}\to B_{R}$ (respectively $\tilde{y}:B_{R}\to A_{R})$ so that $\tilde{y}\tilde{x}=Id_{A_{R}}, xy=Id_{B_{R}}.$
It is clear that the defining conditions on $x, y$ can be restated in terms of vanishing of
certain polynomials over $S.$ Thus, the functor $R\to (R)$ is represented by an affine scheme $Z$ of finite type over $S,$
so $(R)=Z(R)=\Hom_S(\spec(R), X).$
Then by the assumption $Z(\bold{k})$ is nonempty for all base changes $S\to \bold{k}$ with char$(\bold{k})\gg 0.$ Hence
$Z(\mathbb{C})$ is also nonempty and we are done.

\end{proof}

In what follows, for a ring $A$ we denote by $A-\text{mod}$ the category of finitetly generated left $A$-modules.
Recall that two $S$-algebras $A, B$  are called Morita equivalent over $S$ if categories $A-\text{mod}$ and $B-\text{mod}$ are $S$-linearly equivalent.

\begin{proof}[Proof of Theorem \ref{main}]

There exists a finitely generated projective  left $D(X)$-module $P,$ such that $P$ is a generator for $D(X)-\text{mod}$ and
$D(Y)^{op}\cong \Hom_{D(X)}(P, P).$ Let $e: D(X)^n\to P$ be a $D(X)$-module projection for a suitable $n.$
We view $e$ as an idempotent in the matrix algebra $M_n(D(X)).$
Next, we recall that given an idempotent $e$ in a ring $A$, then the $A$-module $Ae$ is a generator if and only if $A=AeA.$
Indeed, as $\Hom_A(Ae, A)=eA,$ there exists a surjective homomorphism $A^n\to A$ (for some $n$) if and only if $1\in AeA.$ 
Thus, since $D(X)$ and $M_n(D(X))$ are Morita equivalent, $P$ being a generator of $D(X)-\text{mod}$ is equivalent to $M^n=M_n(D(X))e$
 being a generator for $M_n(D(X))-\text{mod}$, the latter is equivalent to the equality
$M_n(D(X))=M_n(D(X))eM_n(D(X)).$ We have a corresponding
 $\mathbb{C}$-algebra isomorphism
 $$\phi:D(Y)\cong eM_n(D(X))e\subset M_n(D(X)).$$ 
There exists $f_i, g_i\in M_n(D(X))$, so that $1=\sum_if_ieg_i.$
Let $S\subset\mathbb{C}$ be a large enough finitely generated ring over which $X, Y, e, f_i, g_i$, and $\phi$ are defined.
Enlarging $S$ further if necessary, let  $X_S, Y_S$ be smooth affine integral models of $X, Y$ of finite type over $S,$
such that $D(X_S), D(Y_S)$ are free $S$-modules,
 $e\in M_n(D(X_S)),$ and we have
an $S$-algebra homomorphism 
$$\phi_S: D(Y_S)\to eM_n(D(X_S))e$$
 which becomes an isomorphism over $\mathbb{C}.$
Since $1=\sum_if_ieg_i$ in $M_n(D(X_S))$, we have $M_n(D(X_S))=M_n(D(X_S))eM_n(D(X_S)).$ As $D(Y_S)$ is an $S$-torsion free module, $\phi_S$ is injective.
Now, let $x_1,\cdots, x_m$ be generators of $eM_n(D(X_S))e\subset M_n(D(X_S))$ as an $S$-algebra. Let $S'$ be a finitely generated $S$-algebra
such that $\phi^{-1}(x_i)\in D(Y_{S'})$ for all $i$. Hence, $\phi_{S'}$ is surjective and therefore is
an isomorphism.
In particular, $D(X_{S'}), D(Y_{S'})$ are Morita equivalent $S$-algebras.

Given a base change $S\to\bold{k}$ to
an algebraically closed field of characteristic $p\gg 0$, we get a $\bold{k}$-algebra isomorphism
$D(Y_{\bold{k}})^{op}\cong \Hom_{D(X_{\bold{k}})}(P_{\bold{k}}, P_{\bold{k}}),$
which in turn yields a $\bold{k}$-algebra isomorphism on the level of centers preserving the $\mod p$ reduction Poisson brackets
$$\phi_{\bold{k}}:Z(D(X_{\bold{k}}))\xrightarrow{\sim} Z(D(Y_{\bold{k}})).$$
Hence, we have an isomorphism of Poisson $\bold{k}$-algebras
$$\tilde{\phi}_{\bold{k}}=i^{-1}_{\bold{k}}\phi_{\bold{k}}i_{\bold{k}}:\mathcal{O}(T^*(X_{\bold{k}}))\xrightarrow{\sim}\mathcal{O}(T^*(Y_{\bold{k}})).$$

Now using Lemma \ref{lift}, we see that to finish  the proof of  Theorem \ref{main},
it suffices to show that given $ g\in\mathcal{O}(Y)$ and $\theta\in T^1_Y,$  there exists
a finite $S$-module $V\subset \mathcal{O}(T^*(X)),$ such that 
after a base change $S\to \bold{k}$ to an algebraically closed field $\bold{k}$ of characteristic $p\gg 0$, we have 
$$i_{\bold{k}}^{-1}(\phi_{\bold{k}}(g^p)), i_{\bold{k}}^{-1}(\phi_{\bold{k}}(\theta^p-\theta^{[p]}))\in V_{\bold{k}}.$$
Indeed, 
recall that we have an idempotent $e\in M_n(D(X))$ and an $S$-algebra isomorphism $\psi:D(Y)\cong eM_n(D(X))e.$
Hence, by taking a finite $S$-submodule $W\subset D(X)$ containing all entries of $e$ and $\psi(g),$
 we get that $\phi_{\bold{k}}(g^p)\in eW_{\bold{k}}^pe.$
Let $f\in D(X)$ be a nonzero entry of the matrix $e$.
Next, we observe that the preimage of $eW_{\bold{k}}^pe$ under the isomorphism $Z(D(X_{\bold{k}}))\to eZ(D(X_{\bold{k}}))$
belongs to $f^{-1}W_{\bold{k}}^p\cap Z(D(X_{\bold{k}})),$ the latter by Lemma \ref{key} is contained in $V_{\bold{k}}$
for some finite $S$-submodule $V\subset \mathcal{O}(T^*(X)).$ So, $i^{-1}_{\bold{k}}(\phi_{\bold{k}}(g^p))\in V_{\bold{k}}$ as desired.

Using Lemma \ref{key} again, we have that there exists a nonzero $h\in \mathcal{O}(Y)$ and a finite $S$-submodue $W\subset D(X)$
such that
$$\phi_{\bold{k}}(h^p)\phi_{\bold{k}}(\theta^p-\theta^{[p]})\in eW_{\bold{k}}^pe\cap eZ(D(X_{\bold{k}})).$$
As we have shown already, there exists a finite $S$-submodule $V,$ 
such that $i_{\bold{k}}^{-1}(\phi_{\bold{k}}(h^p))\in V_{\bold{k}}$,
and the preimage of $eW_{\bold{k}}^pe\cap eZ(D(X_{\bold{k}}))$ in $\mathcal{O}(T^*(X_{\bold{k}}))$ is a subspace of
$V_{\bold{k}}.$
Therefore
$$i_{\bold{k}}^{-1}(\phi_{\bold{k}}(\theta^p-\theta^{[p]}))\in V_{\bold{k}}V_{\bold{k}}^{-1}.$$
Once again using Lemma \ref{key}, we conclude that there exists a finite $S$-module $V\subset\mathcal{O}(T^*(X))$ so that
$i_{\bold{k}}^{-1}(\phi_{\bold{k}}(\theta^p-\theta^{[p]}))\subset V_{\bold{k}}.$

\end{proof}

Finally, we present a proof of the aforementioned result of Berest and Wilson. The upshot of our proof is that
unlike the one given in \cite{BW}, our approach does not rely on the description of the Morita equivalent algebras
of $D(X)$ for a smooth algebraic curve $X.$ For the proof, we need to recall the definition of mad subalgebras of Poisson (or noncommutative)
algebras: given a Poisson algebra $A$, its subalgebra $B$ is called a mad subalgebra if it is a maximal Abelian subalgebra (with regards to the Poisson bracket)
such that for every element $f\in B$, the corresponding Poisson bracket $\ad(f)=\lbrace f, -\rbrace :A\to A$ is locally nilpotent.

\begin{cor}\label{BW}

Let $X, Y$ be smooth affine algebraic curves over $\mathbb{C}$, such that $D(X)$ is Morita equivalent to $D(Y).$
Then $X\cong Y.$

\end{cor}

\begin{proof}
By Theorem \ref{main}, we have an isomorphism of symplectic varieties $\theta: T^*(X)\cong T^*(Y).$
The  original argument of Makar-Limanov about mad subalgebras of $D(Y)$ [\cite{ML}, Lemma 2]  adapted to
the Poisson algebra $\mathcal{O}(T^*(Y))$ implies that if $Y$ is not isomorphic to the affine line, then
$\mathcal{O}(Y)$ is the unique mad subalgebra of $\mathcal{O}(T^*(Y)).$ We may assume without loss of generality
that $Y\neq \mathbb{A}^1.$ Since $\mathcal{O}(X)$ is a mad subalgebra of $\mathcal{O}(T^*(X))$, then $\theta(\mathcal{O}(X))$
is a mad subalgebra in $\mathcal{O}(T^*(Y)).$ Hence $\theta(\mathcal{O}(X))=\mathcal{O}(Y)$ and we are done.

\end{proof}

\end{document}